\let\originalleft\left
\let\originalright\right
\renewcommand{\left}{\mathopen{}\mathclose\bgroup\originalleft}
\renewcommand{\right}{\aftergroup\egroup\originalright}
\newcommand{\es}[1]{\begin{equation}\begin{split}#1\end{split}\end{equation}}
\newcommand{\est}[1]{\begin{equation*}\begin{split}#1\end{split}\end{equation*}}
\newcommand{\pr}[1]{\left( #1\right)}
\newcommand{\pmd}[1]{\left| #1\right|}
\newcommand{\tn}[1]{\textnormal{#1}}
\newcommand{\Res}{\operatornamewithlimits{Res}}
\newcommand{\C}{\mathbb{C}}
\newtheorem{theorem}{Theorem}
\newtheorem{lemma}{Lemma}
\begin{document}

\title{An optimal choice of Dirichlet polynomials for the Nyman-Beurling criterion}
\author {S. Bettin, J. B. Conrey, D. W. Farmer}
\address{Sandro Bettin -- School of Mathematics, University of Bristol, Queens Ave, Bristol, BS8 1SN, UK}
\curraddr{Centre de Recherches Math\'ematiques - Universit\'e de MontrŽal, P.O. Box 6128, Centre-ville Station, Montr\'eal (Qu\'ebec), H3C 3J7, Canada}
\address{J. Brian Conrey -- American Institute of Mathematics, 360 Portage Avenue, Palo Alto, CA 94306 - 2244 USA}
\address{David W. Farmer -- American Institute of Mathematics, 360 Portage Avenue, Palo Alto, CA 94306 - 2244 USA}

\dedicatory{In memory of Professor A. A. Karatsuba on the 75th anniversary of his birth}

\subjclass[2010]{Primary: 11M26}

\maketitle
\begin{abstract}
We give a conditional result on the constant in the B\'aez-Duarte reformulation of the Nyman-Beurling criterion for the Riemann Hypothesis. We show that assuming the Riemann hypothesis and that $\sum_{\rho}\frac{1}{|\zeta'(\rho)|^2}\ll T^{3/2-\delta}$, for some $\delta>0$, the value of this constant coincides with the lower bound given by Burnol.
\end{abstract}
\section{Introduction}
The Nyman-Beurling-B\'aez-Duarte approach to the Riemann hypothesis asserts that the Riemann hypothesis is true if and only if 
\est{
\lim_{N\to \infty} d_N^2=0,
}
where 
\est{
d_N^2=\inf_{A_N} \frac{1}{2\pi} \int_{-\infty}^\infty |1-\zeta A_N(1/2+it)|^2 \frac{dt}{\frac 14 +t^2}
}
and the infimum is over all Dirichlet polynomials $A_N(s)=\sum_{n=1}^N\frac{a_n}{n^s}$ of length $N$ (see~\cite{Bag} for a nice account of this). 

An open question is to determine what the rate of convergence of $d_n$ to zero is, assuming the Riemann hypothesis. Balazard and de Roton showed that, if the Riemann hypothesis is true, then
\est{
d_{N}^2\ll \frac{(\log\log N)^{\frac52+\varepsilon}}{\sqrt{\log N}},
}
for all $\varepsilon>0$. On the other hand B\'aez-Duarte, Balazard, Landreau and Saias~\cite{BBLS00,BBLS05} showed (unconditionally) that $d_N^2$ can not decay faster than a constant times $\frac1{\log N}$. More precisely, they showed that
\est{
 \liminf_{N\to \infty } d_N^2 \log N \ge \sum_{\Re (\rho)=1/2} \frac{1}{|\rho|^2},
}
where here and in the following the sum is restricted to distinct zeros of the Riemann zeta function on the critical line. The constant was later improved by Burnol~\cite{Bur} who showed
\est{
\liminf_{N\to \infty } d_N^2 \log N \ge \sum_{\Re (\rho)=1/2} \frac{m(\rho)^2}{|\rho|^2},
}
where $m(\rho)$ denotes the multiplicity of $\rho$. This lower bound is believed to be optimal and one expects that 
\es{\label{asm}
d_N^2 \sim \frac1{\log N}\sum_{\Re (\rho)=1/2} \frac{m(\rho)^2}{|\rho|^2}.
}
Notice that under the Riemann hypothesis, one has
\est{
\sum_{\Re (\rho)=1/2} \frac{m(\rho)}{|\rho|^2}=2+\gamma-\log 4\pi
}
and in particular, if all the non-trivial zeros of $\zeta(s)$ are simple, then~\eqref{asm} can be rewritten as
\est{
d_N^2 \sim \frac{2+\gamma-\log 4\pi}{\log N}.
}

It is the purpose of this note to prove~\eqref{asm} under the Riemann Hypothesis and assuming a mild condition on the growth of the mean value of  $\frac1{|\zeta'(\rho)|^2}$ over the non-trivial zeros $|\rho|\leq T$ of $\zeta(s)$. This will be achieved by using the Dirichlet polynomial 
\est{
V_N(s):=\sum_{n=1}^N\pr{1-\frac{\log n}{\log N}}\frac{\mu(n)}{n^s }.
}
\begin{theorem}\label{tr}
If the Riemann hypothesis is true and if 
\es{\label{cond}
\sum_{|\Im(\rho)|\leq T}\frac1{|\zeta'(\rho)|^2}\ll T^{\frac32-\delta}
}
for some $\delta>0$, then 
\est{
\frac{1}{2\pi} \int_{-\infty}^\infty |1-\zeta V_N(1/2+it)|^2 \frac{dt}{\frac 14 +t^2}\sim \frac{2+\gamma-\log 4\pi}{\log N}. 
}
\end{theorem}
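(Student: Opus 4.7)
The plan is to derive an explicit-formula type representation for $1-\zeta(s)V_N(s)$ on the critical line as a sum of localized profiles indexed by the nontrivial zeros of $\zeta$, and then to compute the $L^2$-norm by isolating a diagonal contribution of sinc-squared type. First, I would start from the Perron identity
\est{
V_N(s)=\frac{1}{\log N}\cdot\frac{1}{2\pi i}\int_{(c)}\frac{N^w}{w^2\,\zeta(s+w)}\,dw,\qquad c>\tfrac12,
}
which follows from $\frac{1}{2\pi i}\int_{(c)}x^w/w^2\,dw=\max(\log x,0)$ applied to the Dirichlet series $1/\zeta(s+w)=\sum_n\mu(n)/n^{s+w}$.

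Next I would shift the contour to a line $\Re w=-a$ with $a>0$ small, deforming slightly around the zeros of $\zeta(s+w)$, which by RH lie on $\Re w=0$. The double pole at $w=0$ contributes $1/\zeta(s)-\zeta'(s)/(\zeta(s)^2\log N)$, and each simple zero $\rho$ of $\zeta$ produces a residue $N^{\rho-s}/((\rho-s)^2\zeta'(\rho)\log N)$. Multiplying through by $\zeta(s)$ and combining with the partial fraction $\zeta'(s)/\zeta(s)=\sum_\rho 1/(s-\rho)+O(\log|t|)$ gives
\est{
1-\zeta(s)V_N(s)=\frac{1}{\log N}\sum_\rho H_\rho(s)+E(s),\qquad H_\rho(s):=\frac{1}{s-\rho}-\frac{\zeta(s)\,N^{\rho-s}}{(\rho-s)^2\,\zeta'(\rho)},
}
where the simple zero of $\zeta$ at $\rho$ cancels the pole of $H_\rho(s)$ there, and $E(s)$ collects the non-zero part of the Hadamard expansion of $\zeta'/\zeta$ together with the remainder integral on $\Re w=-a$. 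The remainder should be controlled via the functional equation and standard convexity bounds on $1/\zeta$ in the critical strip.

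Third, I would compute the integral $I_N$ by expanding $|1-\zeta(s)V_N(s)|^2$ and integrating against $dt/(\tfrac14+t^2)$. Parametrising $s=\tfrac12+it$ and $\rho=\tfrac12+i\gamma$, the cancellation at $\rho$ gives $H_\rho(\tfrac12+it)\sim (N^{\rho-s}-1)/(\rho-s)$ near $t=\gamma$; the substitution $u=(t-\gamma)\log N$ then yields
\est{
\frac{1}{2\pi\,(\log N)^2}\int_{-\infty}^\infty |H_\rho(\tfrac12+it)|^2\,\frac{dt}{\tfrac14+t^2}\sim\frac{1}{2\pi\,|\rho|^2\log N}\int_{-\infty}^\infty\frac{4\sin^2(u/2)}{u^2}\,du=\frac{1}{|\rho|^2\log N}.
}
Summing over $\rho$ and using the (RH) identity $\sum_\rho 1/|\rho|^2=2+\gamma-\log 4\pi$ recovers the asserted main term.

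The principal obstacle is to show that the off-diagonal cross-terms $\rho\ne\rho'$, together with the contribution of $E(s)$, amount to $o(1/\log N)$. These cross-terms carry the weights $1/(\zeta'(\rho)\overline{\zeta'(\rho')})$ and, through the oscillating factor $N^{i(\gamma-\gamma')}$, exhibit decay in the separation $|\gamma-\gamma'|$. The strategy is to truncate the zero-sum at a suitable height $T=T(N)$, split dyadically in $|\gamma-\gamma'|$, and apply Cauchy--Schwarz (or an AM--GM inequality) so that the off-diagonal double sum is reduced to the product of a bounded factor and $\sum_{|\gamma|\le T}1/|\zeta'(\rho)|^2$. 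Hypothesis \eqref{cond} enters exactly at this step: the exponent $3/2$ is the natural break-even threshold against the $\log N$ gain already present in the diagonal, and any power saving $T^{3/2-\delta}$ over that scale is enough to absorb the logarithmic losses from the zero density and the sinc-kernel integration, thereby closing the argument.
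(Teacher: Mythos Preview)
Your route is genuinely different from the paper's. The paper does \emph{not} stay on the critical line and does \emph{not} expand $\zeta'/\zeta$ into partial fractions. Instead, after obtaining the same explicit formula for $V_N$ (your first two steps coincide with its Lemma~\ref{lr1}), it shifts the $t$--integral to the line $\Re(s)=\tfrac12-\varepsilon$ and writes $(1-\zeta V_N(s))(1-\zeta V_N(1-s))$ as a product of two three-term expressions. The point of the shift is the asymmetry $\sum_\rho R_N(\rho,1-s)\ll N^{-\varepsilon}|s|^{3/4-\delta/2+\varepsilon}$ on that line (Lemma~\ref{lr2}), so every cross term containing $R_N(\rho,1-s)$ is $O((\log N)^{-2})$ with no further work. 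The only surviving piece is $\frac{\zeta'}{\zeta}(1-s)\cdot\zeta(s)\sum_\rho R_N(\rho,s)$, and moving the contour back across $\Re(s)=\tfrac12$ picks up a residue $\log N/|\rho|^2+O(1)$ at each $\rho$; summing these gives the main term directly, with no sinc integral and no diagonal/off-diagonal split. Condition~\eqref{cond} is used only once, to bound the $R_N\times R_N$ term via Cauchy--Schwarz.

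There is a concrete gap in your off-diagonal step. Your profile $H_\rho(s)=\frac{1}{s-\rho}-\frac{\zeta(s)N^{\rho-s}}{(\rho-s)^2\zeta'(\rho)}$ has two parts, and only the second carries a factor $1/\zeta'(\rho)$. Hence the cross terms $H_\rho\overline{H_{\rho'}}$ with $\rho\neq\rho'$ split into four types, and the type $\frac{1}{s-\rho}\cdot\overline{\frac{1}{s-\rho'}}$ carries \emph{no} $1/\zeta'$ weight whatsoever; it comes purely from $|\zeta'/\zeta|^2$. Your assertion that ``these cross-terms carry the weights $1/(\zeta'(\rho)\overline{\zeta'(\rho')})$'' and that hypothesis~\eqref{cond} controls them is therefore incorrect as stated. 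On the critical line $\int|\zeta'/\zeta(\tfrac12+it)|^2\,dt/(\tfrac14+t^2)$ diverges, so you cannot bound these pieces trivially; you would need either a separate oscillatory argument for the partial-fraction cross terms, or else to move off the line as the paper does---at which point the $H_\rho$ decomposition loses its purpose, since the paper's asymmetry argument handles everything without it. A secondary issue: your diagonal sinc computation uses $H_\rho(s)\approx(1-N^{\rho-s})/(s-\rho)$, i.e.\ $\zeta(s)\approx\zeta'(\rho)(s-\rho)$, but the discarded correction is $\frac{N^{\rho-s}}{s-\rho}\bigl(1-\tfrac{\zeta(s)}{\zeta'(\rho)(s-\rho)}\bigr)$, which away from $\rho$ is of size $|\zeta(s)|/(|\zeta'(\rho)||s-\rho|^2)$ and must itself be summed over $\rho$ and integrated---this is essentially Lemma~\ref{lr2} again, and it is not addressed.
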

The condition~\eqref{cond} implicitly assumes that the zeros of the Riemann zeta function are all simple. Moreover, this upper bound is ``mild'' in the sense that a conjecture, due to Gonek and recovered by a different heuristic method of Hughes, Keating, and O'Connell~\cite{HKO}, predicts that 
\est{
\sum_{|\rho|\leq T}\frac1{|\zeta'(\rho)|^2}\sim \frac 6{\pi^3} T.
}

We remark that Theorem~\ref{tr} is in contrast to what one might have expected after viewing the graphs of Landreau and Richards~\cite{LR} which at first sight suggest that $V_N$ is not optimal.

This behaviour of the Riemann zeta function resembles that of polynomials. In fact, Grenander and Rosenblatt~\cite{GR} (see also Theorem 2.1 in~\cite{Bur}) showed that  for a polynomial $P(z)$ one has that the zeros of $P$ are all located outside or on the unit circle if and only if $\lim_{N\rightarrow\infty}{\delta_N}=0$, where
\est{
\delta_N^2=\frac1{2\pi}\inf_{Q_N}\int_{0}^{2\pi}\pmd{1-P\pr{z}\,Q_N\pr{z}}^2\,\tn{d}\theta,
}
where $z=e^{i\theta}$ and the infimum is over polynomials $Q_N$ of degree at most $N$. Moreover, if this happens, then 
\est{
\lim_{N\rightarrow\infty} N\delta_N^2 =\sum_{|\rho|=1}{m(\rho)^2},
}
where the sum is restricted to the distinct zeros $\rho$ of $P(z)$ lying on the unit circle and $m(\rho)$ is again the multiplicity of $\rho$. 

This analogy seems to apply also to the choices of optimal polynomials. 
\begin{theorem}\label{tp}
Let $P(z)$ be a polynomial whose zeros are all simple and lie outside or on the unit circle. Let 
\es{\label{defw}
W_N(z):= \sum_{n=0}^N \pr{1-\frac nN}{a_n}z^n,
}
where 
\est{
\frac1{P(z)}=\sum_{n\geq0}a_nz^n
}
is the Taylor expansion in $x=0$ of the inverse of $P(z)$ (i.e. it is the formal power series inverse of $P(z)$). Then
\est{
\frac1{2\pi}\int_{0}^{2\pi}\pmd{1-P\pr{z}W_N\pr{z}}^2\,\tn{d}\theta\sim\frac1{N}\sum_{|\rho|=1}{m(\rho)^2},
}
where $z=e^{i\theta}$.
\end{theorem}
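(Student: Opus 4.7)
The plan is to apply Parseval's identity on the unit circle, which reduces the theorem to understanding the Taylor coefficients of the polynomial $E_N(z):=1-P(z)W_N(z)$.

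First I would derive a clean closed form for $[z^n]E_N$ in the main range $1\leq n\leq N$. The trick is to replace the truncation in $W_N$ by the full power series: from $P(z)\sum_{n\geq 0}a_n z^n=1$ together with $\sum_{n\geq 0}na_n z^n=z(1/P)'(z)=-zP'(z)/P(z)^2$ one obtains
$$P(z)\sum_{n\geq 0}\Big(1-\tfrac{n}{N}\Big)a_n z^n=1+\frac{zP'(z)}{NP(z)}.$$
Truncating the left series at $n=N$ affects only coefficients of degree strictly greater than $N$, hence for $1\leq n\leq N$
$$[z^n]E_N(z)=-\frac{1}{N}[z^n]\frac{zP'(z)}{P(z)}.$$
Expanding the partial fraction expansion $P'(z)/P(z)=\sum_{\rho}(z-\rho)^{-1}$ (legitimate since the zeros are simple) as a geometric series at $z=0$ then yields the key identity
$$[z^n]E_N(z)=\frac{1}{N}\sum_{\rho}\rho^{-n}\qquad(1\leq n\leq N),$$
with the sum over all zeros of $P$. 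Notice the pleasant cancellation: the factors $1/P'(\rho)$ that appear in the partial fraction expansion of $1/P(z)$ have dropped out entirely.

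The next step is Parseval, giving
$$\frac{1}{2\pi}\int_0^{2\pi}|E_N(e^{i\theta})|^2\,d\theta=\sum_{n\geq 0}|[z^n]E_N|^2=\frac{1}{N^2}\sum_{n=1}^N\Big|\sum_{\rho}\rho^{-n}\Big|^2+\text{(tail)}.$$
Expanding the square and exchanging summations reduces the main term to $\frac{1}{N^2}\sum_{\rho,\sigma}\sum_{n=1}^N(\rho\bar\sigma)^{-n}$. By hypothesis $|\rho|\geq 1$, so $|\rho\bar\sigma|\geq 1$ with equality iff both $\rho,\sigma$ lie on the unit circle; in that case moreover $\rho\bar\sigma=1$ exactly when $\rho=\sigma$. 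Hence the inner sum equals $N$ for each of the $k$ diagonal pairs $\rho=\sigma$ with $|\rho|=1$, and is a bounded geometric progression (contributing $O(1)$) for every other pair. This gives a main term $\frac{k}{N}+O(1/N^2)$, which, since the zeros are simple so that $m(\rho)=1$ for each on-circle zero, is precisely $\frac{1}{N}\sum_{|\rho|=1}m(\rho)^2$.

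What remains is to bound the ``tail'' coefficients $[z^n]E_N$ in the range $N<n\leq N+\deg P-1$: a parallel computation that tracks the terms dropped by truncating at $n=N$ shows each is $O(1/N)$, and there are only $\deg P-1$ of them, so they contribute $O(1/N^2)$ to the $L^2$ norm, which is absorbed into the error. The main (mild) obstacle is the bookkeeping needed to produce the clean formula for $[z^n]E_N$ in the main range; once in hand, the rest is an elementary geometric-sum estimate, and there is no genuine analytic difficulty because $E_N$ is a polynomial of bounded degree.
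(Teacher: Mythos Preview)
Your argument is correct, and it is genuinely different from the paper's. The paper first proves an exact identity (Lemma~\ref{lp}) expressing $W_N(s)$ as $\frac{1}{P(s)}\bigl(1+\frac{s}{N}\frac{P'}{P}(s)\bigr)-\frac{s}{N}Y_N(s)$ with $Y_N$ a sum of residues over the zeros of $P$, then writes the $L^2$-integral as a contour integral over $|s|=1$, shifts the contour to $|s|=\delta>1$, substitutes the lemma, and isolates the main contribution by a further contour shift picking up residues at the on-circle zeros. Your route bypasses all of this: you read off the Taylor coefficients of $E_N=1-PW_N$ directly from the logarithmic-derivative identity $P\cdot\sum_{n\ge 0}(1-n/N)a_nz^n=1+\tfrac{z}{N}\tfrac{P'}{P}$, apply Parseval on the circle, and reduce everything to the elementary estimate $\sum_{n=1}^N(\rho\bar\sigma)^{-n}=N\cdot\mathbf{1}_{\rho=\sigma,\,|\rho|=1}+O(1)$. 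This is cleaner and more self-contained for the polynomial case. What the paper's contour approach buys is the template for Theorem~\ref{tr}: for $\zeta(s)$ the analogue of $E_N$ is not a polynomial, there is no Parseval on a compact group available, and one is essentially forced into the contour-shift/residue framework (with the residue sum $Y_N$ replaced by $\sum_\rho R_N(\rho,s)$), so the paper deliberately proves the polynomial case in a way that transfers. Your method does not obviously generalise to $\zeta$, but for the stated theorem it is a perfectly good and arguably preferable proof.
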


We remark that the proofs of Theorem~\ref{tr} and~\ref{tp} are very similar, the main difference being that the Riemann zeta function has infinitely many zeros. This generates some issues concerning the convergence of certain sums of $\frac1{\zeta'(\rho)}$, which force us to assume condition~\eqref{cond}.

\section{Polynomials}
\begin{lemma}\label{lp}
Let $P(s)$ be a polynomial with $P(0)\neq0$. We have 
\est{
W_N(s)=\frac 1{P(s)}\pr{1+\frac{s}{N}\frac{P'}{P}(s)}-\frac {s}{N}Y_N(s),
}
where $W_N(s)$ is defined in~\eqref{defw},
\est{
Y_N(s):=\sum_{\rho}\Res_{z=\rho}\frac{{s}^{N}}{P(z)(z-{s})^2z^{N}}
}
and the sum is over distinct zeros $\rho$ of $P(z)$.
\end{lemma}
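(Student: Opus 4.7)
My strategy is to represent $W_N(s)$ as a contour integral via Cauchy's formula for $a_n$, deform the contour to infinity, and collect the residues at the zeros of $P$. Fix $r$ with $0<r<\min_\rho|\rho|$ and take $|s|>r$ (the general case follows by analytic continuation in $s$); then
$$a_n=\frac{1}{2\pi i}\oint_{|z|=r}\frac{dz}{P(z)z^{n+1}},$$
so interchanging the finite sum with the integral gives
$$W_N(s)=\frac{1}{2\pi i}\oint_{|z|=r}\frac{1}{zP(z)}\sum_{n=0}^N\pr{1-\frac{n}{N}}\pr{\frac{s}{z}}^n dz.$$
A short algebraic computation (for instance via $\sum_{n=0}^N(1-n/N)u^n=\frac{1}{N}\sum_{k=0}^{N-1}\frac{1-u^{k+1}}{1-u}$) gives the closed form
$$\sum_{n=0}^N\pr{1-\frac{n}{N}}u^n = \frac{1}{1-u}-\frac{u(1-u^N)}{N(1-u)^2},$$
and substituting $u=s/z$ yields
$$W_N(s)=\frac{1}{2\pi i}\oint_{|z|=r}\left[\frac{1}{P(z)(z-s)}-\frac{s(z^N-s^N)}{NP(z)z^N(z-s)^2}\right] dz.$$

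I would then deform the contour to $|z|=R$ and let $R\to\infty$. Assuming $\deg P\ge 1$, the first bracketed term decays like $|z|^{-1-\deg P}$ at infinity; in the second, $(z^N-s^N)/z^N\to 1$ at infinity while the simple zero of $z^N-s^N$ at $z=s$ cancels one factor of $(z-s)$, so the whole second term is $O(|z|^{-2-\deg P})$. Hence the integral over $|z|=R$ vanishes in the limit, and by the residue theorem on the annulus $r\le|z|\le R$,
$$W_N(s)=-\Res_{z=s}[\cdots]-\sum_\rho\Res_{z=\rho}[\cdots],$$
where $[\cdots]$ denotes the bracketed integrand. Using $\lim_{z\to s}(z^N-s^N)/(z-s)=Ns^{N-1}$ one checks that the residues of the two pieces at $z=s$ are $1/P(s)$ and $-1/P(s)$, which cancel, leaving $W_N(s)=-\sum_\rho\Res_{z=\rho}[\cdots]$.

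Finally, I would split
$$\frac{s(z^N-s^N)}{z^N(z-s)^2}=\frac{s}{(z-s)^2}-\frac{s^{N+1}}{z^N(z-s)^2}$$
so that the integrand becomes $\frac{1}{P(z)(z-s)}-\frac{s}{NP(z)(z-s)^2}+\frac{s^{N+1}}{NP(z)z^N(z-s)^2}$. The first two pieces no longer involve $z^{-N}$ and decay fast enough at infinity for the sum of all their residues in $\C$ to vanish; therefore their sums of residues over the zeros of $P$ equal minus their residues at $z=s$, namely $-\frac{1}{P(s)}$ and $-\frac{sP'(s)}{NP(s)^2}$. The third piece contributes $\frac{s}{N}Y_N(s)$ by the very definition of $Y_N$. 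Negating the total yields
$$W_N(s)=\frac{1}{P(s)}+\frac{sP'(s)}{NP(s)^2}-\frac{s}{N}Y_N(s),$$
which is the stated identity. The main technical point is verifying the decay needed to justify the contour shift to infinity; beyond that, the pleasant cancellation at $z=s$ and the straightforward residue bookkeeping finish the proof.
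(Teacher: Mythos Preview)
Your proof is correct and follows essentially the same route as the paper's: Cauchy's integral for $a_n$, the closed form for $\sum_{n\le N}(1-n/N)u^n$, and a contour shift to infinity collecting residues at the zeros of $P$. The only difference is that the paper places $s$ \emph{inside} the small circle (taking $0<|s|<\varepsilon$), so the pieces $\frac{1}{P(z)(z-s)}-\frac{s}{NP(z)(z-s)^2}$ are evaluated directly as the residue at $z=s$ and only the $s^{N+1}/z^N$ term needs the outward shift; your choice $|s|>r$ costs you the extra (but correct) cancellation and re-splitting at $z=s$.
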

\begin{proof}
Since $P(0)\neq0$, we can take an $\varepsilon>0$ such that all the zeros of $P(z)$ lie outside of the circle $|z|=\varepsilon$. Now, observe that we can assume $0<|s|<\varepsilon$, since the result will then extend to all $\C$ by analytic continuation. Denoting by $\mathcal{C}_y$ the circle of radius $y>0$ (oriented in the positive direction), by the residue theorem we have that 
\est{
a_n=\frac1{2\pi i}\int_{\mathcal{C}_\varepsilon} \frac1{P(z)}\frac{\tn{d}z}{z^{n+1}},
}
therefore 
\est{
W_N(s)=\frac1{2\pi i}\int_{\mathcal{C}_\varepsilon}\frac1{P(z)} \sum_{n=0}^N \pr{1-\frac nN}\pr{\frac{s}{z}}^n \frac{\tn{d}z}{z}.\\
}
Now,
\est{
\sum_{n=0}^N\pr{1-\frac n N}z^n=-\frac1N\frac{z-z^{N+1}}{(1-z)^2}+\frac1{1-z}
}
and thus
\est{
W_N(s)=\frac1{2\pi i}\int_{\mathcal{C}_\varepsilon}\frac1{P(z)} \pr{-\frac1N\frac{sz^N-{s}^{N+1}}{(z-{s})^2z^{N}}+\frac1{z-{s}}}{\tn{d}z}.
}
Now, by the residue theorem
\est{
\frac1{2\pi i}\int_{\mathcal{C}_\varepsilon}\frac1{P(z)} \pr{-\frac1N\frac{s}{(z-{s})^2}+\frac1{z-{s}}}{\tn{d}z}=\frac1{P(s)}\pr{1+\frac{s}{N}\frac{P'(s)}{P(s)}},
}
whereas, moving the line of integration to $\mathcal{C}_y$ and letting $y$ tend to infinity, one has that
\est{
\frac1{2\pi i N}\int_{\mathcal{C}_\varepsilon}\frac1{P(z)}\frac{{s}^{N+1}}{(z-{s})^2z^{N}}{\tn{d}z}=-\frac {s}{N}Y_N(s)
}
and the Lemma follows.
\end{proof}

\begin{proof}[Proof of Theorem~\ref{tp}]
Let $\delta>1$ be such that $P(s)$ does not have any zero on $1<|s|\leq\delta$. We have
\est{
\frac1{2\pi}\int_{0}^{2\pi}\pmd{1-P\pr{z}W_N\pr{z}}^2\,\tn{d}\theta&=\frac1{2\pi i}\int_{\mathcal{C}_1}\pr{1-P\pr{s}W_N\pr{s}}\pr{1-\overline{P}\pr{\frac1s}\overline{W}_N\pr{\frac1s}}\,\frac{\tn{d}s}s\\
&=\frac1{2\pi i}\int_{\mathcal{C}_{\delta}}\pr{1-P\pr{s}W_N\pr{s}}\pr{1-\overline{P}\pr{\frac1s}\overline{W}_N\pr{\frac1s}}\,\frac{\tn{d}s}s.\\
}
Therefore, by Lemma~\ref{lp}, this is
\est{
\frac1{2\pi i N^2}\int_{\mathcal{C}_\delta}\pr{\frac{P'}{P}(s)-P(s)Y_N(s)} \pr{\frac{\overline{P}'}{\overline{P}}\pr{\frac1s}-\overline {P}\pr{\frac1s}\overline{Y}_N\pr{\frac1s}}\,\frac{\tn{d}s}{s}.\\
}
Now, for $|s|=\delta$ one has
\est{
Y_N(s)\overline{Y}_{N}\pr{\frac1s}&=O(1),
}
therefore 
\est{
\frac1{2\pi i N^2}\int_{\mathcal{C}_\delta}\pr{\frac{P'}{P}(s)\frac{\overline{P}'}{\overline{P}}\pr{\frac1s}+P(s)Y_N(s)\overline {P}\pr{\frac1s}\overline{Y}_N\pr{\frac1s}}\,\frac{\tn{d}s}{s}=O\pr{\frac1{N^2}}.\\
}
Moreover for $s\in\mathcal{C}_\delta$ one has that $\overline{Y}_N\pr{\frac1s}=O\pr{\delta^{-N}}$, thus
\est{
-\frac1{2\pi i N^2}&\int_{\mathcal{C}_\delta}\pr{\frac{P'}{P}(s)\overline {P}\pr{\frac1s}\overline{Y}_N\pr{\frac1s}}\,\frac{\tn{d}s}{s}=O\pr{\delta^{-N}/N^2}.\\
}
Finally, by the residue theorem,
\est{
-\frac1{2\pi i N^2}&\int_{\mathcal{C}_\delta}P(s)Y_N(s)\frac{\overline{P}'}{\overline{P}}\pr{\frac1s}\,\frac{\tn{d}s}{s}=\\
&=-\frac1{N^2}\sum_{|\rho|=1}\Res_{s=\rho}P(s)Y_N(s)\frac{\overline{P}'}{\overline{P}}\pr{\frac1s}\frac1s+\\
&\quad-\frac1{2\pi i N^2}\int_{\mathcal{C}_{\frac1\delta}}P(s)Y_N(s)\frac{\overline{P}'}{\overline{P}}\pr{\frac1s}\,\frac{\tn{d}s}{s}\\
&=-\frac1{N^2}\sum_{|\rho|=1}\Res_{s=\rho}P(s)Y_N(s)\frac{\overline{P}'}{\overline{P}}\pr{\frac1s}+O\pr{\delta^{-N}/N^2}.\\
}
The theorem then follows by observing that
\est{
\Res_{s=\rho}P(s)Y_N(s)\frac{\overline{P}'}{\overline{P}}\pr{\frac1s}\frac1s=-N+O(1).
}
\end{proof}

\section{The Riemann zeta-function}
We start with the following lemma, which is the analogue of Lemma~\ref{lp}. We remark that this lemma is unconditional.

\begin{lemma}\label{lr1}
If $0< \Re \pr{s} < 1$, then
\est{
V_N(s)=\frac{1}{\zeta(s)} \left(1-\frac 1{\log N} \frac{\zeta'}{\zeta}(s)\right) +\frac{1}{\log N} \sum_{\rho}
R_N(\rho,s) +\frac{1}{\log N} F_s(1/N),
}
where the sum is over distinct non-trivial zeros $\rho$ of $\zeta(s)$ with
\est{
R_N(\rho,s)= \Res_{z=\rho} \frac{N^{z-s}}{\zeta(z)(z-s)^2},
}
and where
\est{
F_s(z)=\pi z^s \sum_{n=1}^\infty \frac{(-1)^n (2\pi)^{2n+1} z^{2n}}{(2n)! \zeta(2n+1) (2n+s)^2}
}
is an entire function of $z$.
\end{lemma}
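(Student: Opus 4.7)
The plan is to represent $\log N\cdot V_N(s)$ as a Perron-type contour integral and then shift the contour leftward, collecting the residues. Starting from the identity
\est{
\log N \cdot V_N(s) = \sum_{n=1}^\infty \log^+(N/n)\,\frac{\mu(n)}{n^s},
}
I would apply the weighted Perron formula $\frac{1}{2\pi i}\int_{(c)}\frac{x^w}{w^2}\,dw=\log^+(x)$, valid for $c>0$ and $x>0$, interchange sum and integral for $c>1-\Re(s)$, and substitute $z=w+s$ to obtain
\est{
\log N \cdot V_N(s) = \frac{1}{2\pi i}\int_{(c_1)} \frac{N^{z-s}}{\zeta(z)(z-s)^2}\,dz,
}
with $c_1>1$.

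Next I would shift this contour leftward to a vertical line $\Re(z)=-2M-1$ for large half-integer $M$. The integrand is meromorphic with a double pole at $z=s$, simple poles at each non-trivial zero $\rho$ of $\zeta$, and simple poles at each trivial zero $z=-2n$; the point $z=1$ contributes no pole since $1/\zeta$ is holomorphic there. The residue at $z=s$ equals $\frac{\log N}{\zeta(s)}-\frac{\zeta'(s)}{\zeta(s)^2}$, which after dividing through by $\log N$ gives the first displayed term of the lemma. The residue at each non-trivial zero is $R_N(\rho,s)$ by definition, producing the middle sum.

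For the trivial zeros I would use the functional equation $\zeta(z)=2^z\pi^{z-1}\sin(\pi z/2)\Gamma(1-z)\zeta(1-z)$: only the derivative of the vanishing sine survives at $z=-2n$, yielding $\zeta'(-2n)=(-1)^n(2n)!\,\zeta(2n+1)/(2^{2n+1}\pi^{2n})$. Inserting this into the residue $\frac{N^{-2n-s}}{(-2n-s)^2\zeta'(-2n)}$ and summing over $n\ge1$ produces (after extracting the common factor $N^{-s}=(1/N)^s$) the series defining $F_s(1/N)$, and the factorials in the denominator make that series entire in the indicated variable. To justify the shift itself, the functional equation also gives factorial decay of $|1/\zeta(z)|$ as $\Re(z)\to-\infty$, which combined with $|N^{z-s}|=N^{-2M-1-\Re(s)}$ ensures that the tail integral on $\Re(z)=-2M-1$ vanishes as $M\to\infty$.

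The main technical obstacle is the horizontal pieces of the contour at heights $\pm T$, coupled with the conditional convergence of $\sum_\rho R_N(\rho,s)$. Since the lemma is unconditional, the non-trivial zeros can lie anywhere in the critical strip and the series need not converge absolutely. The standard remedy is to close the contour along heights $T\to\infty$ chosen to avoid clusters of zeros where $\log|1/\zeta(\sigma\pm iT)|$ blows up; the horizontal integrals then vanish in the limit, and the sum $\sum_\rho R_N(\rho,s)$ is interpreted as $\lim_{T\to\infty}\sum_{|\Im\rho|\le T}$. Once this is set up, everything reduces to the direct residue computation above.
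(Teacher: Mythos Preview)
Your approach is correct and essentially identical to the paper's: the paper writes $V_N(s)=\frac{1}{\log N}\frac{1}{2\pi i}\int_{(2)}\frac{N^w}{\zeta(s+w)}\frac{dw}{w^2}$ (which is your integral after the substitution $z=w+s$), shifts the contour to $\Re(w)=-\Re(s)-2M-1$ for a large integer $M$, collects exactly the residues you describe, and lets $M\to\infty$. One small slip: you want $M$ to be an \emph{integer}, not a half-integer, so that the line $\Re(z)=-2M-1$ misses the trivial zeros; otherwise your write-up is, if anything, more explicit than the paper's terse proof about the residue at $z=s$, the horizontal segments, and the interpretation of $\sum_\rho$ as a symmetric limit.
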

\begin{proof}
We have
\est{
V_N(s)=\frac 1 {\log N}\frac{1}{2\pi i}\int_{(2)}\frac{N^w}{\zeta(s+w)}\frac{dw}{w^2},
}
where we use the notation $\int_{(c)}$ to mean an
integration up the vertical line from $c-i\infty$ to $c+i\infty$. Now we move the path of integration to $\Re (w) = -\Re (s) -2M -1$
for a large integer $M$. The residue at $w=\rho-s$ is $R_N(\rho,s)/\log N$. The residue at $s+w=-2 n$ is
\est{
\frac{N^{-2 n-s}}{\zeta'(-2 n)(2 n+s)^2 \log N}
}
and the integral on the new path is $\ll N^{-2M-1}$. Letting $M \to \infty$ and using
\est{
\zeta'(-2n)=\frac{(-1)^n \pi (2n)! \zeta(2 n+1)}{(2\pi)^{2 n+1}}
}
we obtain the result.
\end{proof}

\begin{lemma}\label{lr2}
Let $\varepsilon>0$. Assume the Riemann hypothesis and that all the zeros of $\zeta(s)$ are simple. Then, if condition~\eqref{cond} holds, for $\Re(s)=\frac12\pm\varepsilon$ one has
\es{\label{elr2}
\sum_{\rho}R_N(\rho,s)\ll N^{\mp\varepsilon}\pmd{s}^{\frac34-\frac\delta2+\varepsilon}.
}
\end{lemma}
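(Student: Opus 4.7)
The plan is to evaluate $R_N(\rho,s)$ explicitly using the assumed simplicity of the zeros, then control the resulting sum by a dyadic decomposition in $|\gamma - t|$ combined with Cauchy--Schwarz and the hypothesis~\eqref{cond}. Since all non-trivial zeros are simple,
\est{
R_N(\rho,s) = \frac{N^{\rho-s}}{\zeta'(\rho)(\rho-s)^2},
}
and under RH one has $\rho = \tfrac12 + i\gamma$, so $|N^{\rho-s}| = N^{\mp\varepsilon}$. Thus it suffices to show
\est{
\sum_{\rho} \frac{1}{|\zeta'(\rho)||\rho-s|^2} \ll |s|^{3/4 - \delta/2 + \varepsilon}.
}
Writing $t = \Im(s)$ and $T = |s|+2$, the identity $|\rho - s|^2 = \varepsilon^2 + (\gamma - t)^2$ provides a lower bound by $\varepsilon^2$ always and by $(\gamma - t)^2$ far from $t$.

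Next, I would partition the zeros into $Z_0 := \{\rho : |\gamma - t| \leq 1\}$ and $Z_k := \{\rho : 2^{k-1} < |\gamma - t| \leq 2^k\}$ for $k \geq 1$, so that $|Z_k| \ll 2^k \log(T+2^k)$ by the Riemann--von Mangoldt formula. On each block, Cauchy--Schwarz yields
\est{
\sum_{\rho \in Z_k} \frac{1}{|\zeta'(\rho)|} \leq |Z_k|^{1/2} \left( \sum_{\rho \in Z_k} \frac{1}{|\zeta'(\rho)|^2} \right)^{1/2},
}
and~\eqref{cond} applied at height $T + 2^k$ bounds the second factor by $(T + 2^k)^{3/4 - \delta/2}$. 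Combined with $|\rho - s|^{-2} \ll 4^{-k}$ on $Z_k$ for $k \geq 1$ (and $\ll \varepsilon^{-2}$ on $Z_0$), the contribution of $Z_k$ is $\ll 2^{-3k/2} (T+2^k)^{3/4 - \delta/2} (\log(T+2^k))^{1/2}$.

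Summing over $k$, the range $2^k \leq T$ gives a geometric series of ratio $2^{-3/2}$ dominated by the $k=0$ term of order $(\log T)^{1/2} T^{3/4 - \delta/2}$, while the tail $2^k > T$ contributes $\sum_{k} 2^{-k(3/4 + \delta/2)} k^{1/2}$, which converges and is smaller. Absorbing log factors into $|s|^\varepsilon$ delivers~\eqref{elr2}. The main obstacle is the calibration of exponents: Cauchy--Schwarz costs a factor $|Z_k|^{1/2} \approx 2^{k/2}(\log T)^{1/2}$ while $|\rho - s|^{-2}$ gives a gain of $4^{-k}$, so the leftover $2^{-3k/2}$ is just enough to yield a geometric series; the exponent $3/2$ in~\eqref{cond} is exactly what makes the $T$-power in the final bound equal to $3/4 - \delta/2$.
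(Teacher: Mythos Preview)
Your argument is correct and yields the stated bound. The core ingredients---the explicit formula $R_N(\rho,s)=N^{\rho-s}/(\zeta'(\rho)(\rho-s)^2)$ under simplicity, the reduction to $\sum_\rho |\zeta'(\rho)|^{-1}|\rho-s|^{-2}$, and the use of Cauchy--Schwarz together with~\eqref{cond} and the Riemann--von Mangoldt zero-count---are exactly those of the paper. The difference lies in the decomposition: the paper makes a single near/far split according to whether $|\rho-s|<|\rho|/2$; the far piece is controlled by the convergent series $\sum_\rho |\zeta'(\rho)|^{-1}|\rho|^{-2}$, and on the near piece one Cauchy--Schwarz gives
\[
\Bigl(\sum_{|\rho|<2|s|}\frac{1}{|\zeta'(\rho)|^2}\Bigr)^{1/2}\Bigl(\sum_{|\rho|<2|s|}\frac{1}{|\rho-s|^4}\Bigr)^{1/2}\ll |s|^{3/4-\delta/2}(\log|s|)^{1/2},
\]
using $\sum_{|\rho|<2|s|}|\rho-s|^{-4}\ll \log(|s|+2)$. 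Your dyadic shelling in $|\gamma-t|$ recovers this last estimate implicitly and applies Cauchy--Schwarz block by block; it is a little more bookkeeping but entirely equivalent in strength, and arguably more robust if one later wants to insert different weights. Either way the exponent $3/4-\delta/2$ comes out of the same balance: $(|Z_k|\cdot T^{3/2-\delta})^{1/2}\cdot 4^{-k}$, with the $2^{-3k/2}$ left over ensuring geometric convergence.
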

\begin{proof}
Firstly observe that, by the Cauchy-Schwartz inequality,~\eqref{cond} implies
\est{
\sum_{|\rho|\leq T}\frac{1}{\pmd{\zeta'(\rho)}}\ll \sqrt{ N(T)\sum_{|\rho|\leq T}\frac{1}{\pmd{\zeta'(\rho)}^2}}\ll T^{\frac54-\frac{\delta }2}\sqrt{\log T},
}
since 
\est{
N(T):=\frac12\sum_{|\rho|\leq T}1=\frac{T}{2\pi}\log\frac T{2\pi e}+O\pr{\log T}. 
}
Therefore, by partial summation, we have that the series
\est{
\sum_{\rho}\frac{1}{|\zeta'(\rho)||\rho|^{\alpha}}
}
is convergent for any $\alpha>\frac54-\frac{\delta}2$. Now, for a simple zero $\rho$, we have
\est{
R_N(\rho,s)=\sum_{\rho}\frac{N^{\rho-s}}{\zeta'(\rho)(\rho-s)^2}.
}
Therefore
\es{\label{e2lr2}
N^{\pm\varepsilon} \sum_{\rho}R_N(\rho,s)&\ll  \sum_{|\rho-s|<\frac{|\rho|}2}\frac{1}{|\zeta'(\rho)||\rho-s|^2}+ \sum_{|\rho-s|\geq\frac{|\rho|}2}\frac{1}{|\zeta'(\rho)||\rho-s|^2}\\
&\ll  \sum_{|\rho-s|<\frac{|\rho|}2}\frac{1}{|\zeta'(\rho)||\rho-s|^2}+ \sum_{|\rho-s|\geq\frac{|\rho|}2}\frac{1}{|\zeta'(\rho)||\rho|^2}\\
&\ll  \sum_{|\rho-s|<\frac{|\rho|}2}\frac{1}{|\zeta'(\rho)||\rho-s|^2}+1.\\
}
Now, by the Cauchy-Schwartz inequality,
\est{
\sum_{|\rho-s|<\frac{|\rho|}2}\frac{1}{|\zeta'(\rho)||\rho-s|^2}\ll\sqrt{\pr{\sum_{|\rho|<{2}|s|}\frac{1}{|\zeta'(\rho)|^2}}\
\pr{\sum_{|\rho|<2|s|}\frac{1}{|\rho-s|^4}}}\ll |s|^{\frac 34-\frac\delta2+\varepsilon},
}
since, by partial summation,
\est{
\sum_{|\rho|<2|s|}\frac{1}{|\rho-s|^4}\ll\log(|s|+2).
}
This completes the proof of the lemma.
\end{proof}

\begin{proof}[Proof of Theorem~\ref{tr}]
We have
\est{
\frac{1}{2\pi}\int_{-\infty}^\infty &|1-\zeta V_N(1/2+it)|^2 \frac{dt}{1/4+t^2}\\
&=\frac1{2\pi i}\int_{\pr{\frac12}}\pr{1-\zeta V_N(s)}\pr{1-\zeta V_N(1-s)}\,\frac{ds}{s(1-s)}\\
&=\frac1{2\pi i}\int_{\pr{\frac12-\varepsilon}}\pr{1-\zeta V_N(s)}\pr{1-\zeta V_N(1-s)}\,\frac{ds}{s(1-s)}.\\
}
By Lemma~\ref{lr1}, this is
\es{\label{tt}
\frac1{\log^2 N}\frac1{2\pi i}\int_{\pr{\frac12-\varepsilon}}&\pr{\frac{\zeta'}{\zeta^2}(s) -\sum_{\rho}
R_N(\rho,s) -F_s\pr{\frac1N}}\times\\
&\times \pr{\frac{\zeta'}{\zeta^2}(1-s) - \sum_{\rho}
R_N(\rho,1-s) -F_{1-s}\pr{\frac1N}} \frac{\zeta(s)\zeta(1-s)}{s(1-s)}\,{ds}.\\
}
Now, we have
\est{
\frac1{\log^2 N}\frac1{2\pi i}&\int_{\pr{\frac12-\varepsilon}} \sum_{\rho_1,\rho_2}
R_N(\rho_1,s)R_N(\rho_2,1-s) \frac{\zeta(s)\zeta(1-s)}{s(1-s)}\,{ds}\\
&\ll\frac1{\log^2 N}\int_{\pr{\frac12-\varepsilon}} {\sum_{|\rho-s|<\frac{|\rho|}2}
\frac{1}{|\zeta'(\rho)||\rho-s|^2} \,\frac{|ds|}{|s|^{\frac54+\frac{\delta}2-5\varepsilon}}}+O\pr{\frac1{\log^2 N}},\\
}
where we used~\eqref{elr2},~\eqref{e2lr2} and the bound $\zeta\pr{\frac{1}2\pm \varepsilon\pm it}\ll |t|^{2\varepsilon}$ (which is a consequence of the Lindel\"of hypothesis). Reversing the order of summation and integration, we have that this is bounded by
\est{
\frac1{\log^2 N}\sum_{\rho}\frac{1}{|\zeta'(\rho)|}&\int_{\pr{\frac12-\varepsilon}+i\pr{\Im(\rho)-\frac{\pmd{\rho}}2}}^{\pr{\frac12-\varepsilon}+i\pr{\Im(\rho)+\frac{\pmd{\rho}}2}}
 \,\frac{|ds|}{|\rho-s|^2|s|^{\frac54+\frac{\delta}2-5\varepsilon}}+O\pr{\frac1{\log^2 N}}\\
 &\ll\frac1{\log^2 N}\sum_{\rho}\frac{1}{|\zeta'(\rho)||\rho|^{\frac54+\frac{\delta}2-5\varepsilon}}\ll\frac1{\log^2 N},
}
if $\varepsilon<\frac{\delta}{10}$.

Now, by Lemma~\ref{lr2} and the trivial estimate $F_s(z)=O\pr{N^{-\frac 52}}$, all the other terms in~\eqref{tt} are trivially $O\pr{\frac1{\log^2 N}}$ apart from 
\es{\label{etr}
-\frac1{\log^2 N}\frac1{2\pi i}\int_{\pr{\frac12-\varepsilon}} \frac{\zeta'}{\zeta}(1-s) \sum_{\rho}
R_N(\rho,s) \frac{\zeta(s)}{s(1-s)}\,{ds}.\\
}
The integrand has a double pole at every zero $\rho$ of residue 
\est{
\operatorname{Res}_{s=\rho}\pr{{\frac{\zeta'}{\zeta}(1-s) \sum_{\rho}
R_N(\rho,s) \frac{\zeta(s)}{s(1-s)}}}&=
\frac{\log N-\frac12\frac{\zeta''\pr{\rho}}{\zeta'(\rho)}+\frac{\chi'}{\chi}(\rho)+\frac{1-2\rho}{|\rho|^2}}{|\rho|^2}\\
&=\frac{\log N}{|\rho|^2}+O\pr{\frac{1}{|\rho|^{2-\varepsilon}|\zeta'(\rho)|}+\frac1{|\rho|^2}},
}
where we used the bound $\zeta''\pr{\frac12+it}\ll |t|^{\varepsilon}$, which follows from the Lindel\"of hypothesis and Cauchy's estimate for the derivatives of a holomorphic function. It follows that moving the line of integration in~\eqref{etr} to $\Re(s)=\frac12+\varepsilon$ we get that the integral is equal to
\est{
\frac1{\log N}\sum_{\rho}\frac{1}{|\rho|^2}+O\pr{\frac1{\log^2N}},
}
and Theorem~\ref{tr} then follows.
\end{proof}

 \appendix

\ \\
 
\end{document}